\numberwithin{equation}{section}
\newtheorem{theorem}{Theorem}
\theoremstyle{remark}
\newtheorem*{remark}{\bf Remark}
\newtheorem{problem}{\bf Problem}
\newtheorem{conjecture}{\bf Conjecture}
\begin{document}

\title{O\protect\lowercase{n a $q$-deformation of modular forms}}

\author{V\protect\lowercase{ictor} J. W. G\protect\lowercase{uo} \& W\protect\lowercase{adim} Z\protect\lowercase{udilin}}
\address{School of Mathematical Sciences, Huaiyin Normal University, Huai'an 223300, Jiangsu, People's Republic of China}
\email{jwguo@hytc.edu.cn}

\address{Department of Mathematics, IMAPP, Radboud University, PO Box 9010, 6500~GL Nijmegen, Netherlands}
\email{w.zudilin@math.ru.nl}

\date{28 December 2018}

\begin{abstract}
There are many instances known when the Fourier coefficients of modular forms are congruent to partial sums of hypergeometric series.
In our previous work, such partial sums are related to the radial asymptotics of infinite $q$-hypergeometric sums at roots of unity.
Here we combine the two features to construct a hypergeometric $q$-deformation of two CM modular forms of weight~3
and discuss the corresponding $q$-congruences.
\end{abstract}

\thanks{The first author was partially supported by the National Natural Science Foundation of China (grant 11771175).}

\subjclass[2010]{Primary 11F33; Secondary 11B65, 33C20, 33D15, 44A15}
\keywords{Ramanujan; $q$-analogue; cyclotomic polynomial; basic hypergeometric function; (super)congruence.}

\maketitle

\rightline{\em To Bruce Berndt, with admiration and warm wishes,}
\rightline{\em on his $\dfrac{(q;q)_6}{(1-q)^4(1-q^3)^2}$ birthday, as $q\to1$}

\bigskip
\section{Introduction}
\label{sec-intro}
The hypergeometric identity
\begin{equation}
\sum_{k=0}^\infty(4k+1)\frac{(\frac12)_k^3}{k!^3}\,(-1)^k=\frac2\pi,
\label{eq-HB}
\end{equation}
due to G.~Bauer \cite{Ba59}, is more than 150 years old but still attracts a lot of mathematical interest
because of its belonging to a family of the so-called Ramanujan-type identities for $1/\pi$ (see \cite{BBC09,EZ94,Le10,Ra14,Zu08} and, in particular, \cite[Chap.~15]{AB09} and \cite[Chap.~14]{Co17}).
Here $(a)_k=\prod_{j=0}^{k-1}(a+j)=\Gamma(a+k)/\Gamma(a)$ is the Pochhammer notation, so that $(\frac12)_k/k!=\binom{2k}k2^{-2k}$ for $k=0,1,2,\dots$\,.
One arithmetic manifestation of a special status of \eqref{eq-HB} are supercongruences
\begin{align}
\sum_{k=0}^{p-1}(4k+1)\frac{(\frac12)_k^3}{k!^3}\,(-1)^k
&\equiv\sum_{k=0}^{(p-1)/2}(4k+1)\frac{(\frac12)_k^3}{k!^3}\,(-1)^k
\nonumber\\
&\equiv(-1)^{(p-1)/2}p\pmod{p^3}
\quad\text{for primes}\; p>2,
\label{eq-cong0}
\end{align}
observed by L.~Van Hamme \cite[Conjecture (B.2)]{VH97} and proved subsequently by E.~Mortenson \cite{Mo08} (see also \cite{Zu09}).

A principal objective of our earlier work \cite{GZ18b} was demonstration that the hypergeometric evaluations like \eqref{eq-HB}
and congruences for truncated sums like \eqref{eq-cong0} may be deduced, in a uniform way, from suitable $q$-deformations of \eqref{eq-HB}.
Namely, the asymptotics of such $q$-deformations as $q$ tends radially to a root of unity governs the behaviour of partial sums related to the degree of that root.
Notice that the right-hand sides $a(p)=(-1)^{(p-1)/2}p$ of the congruences \eqref{eq-cong0}
combine to the Dirichlet generating function
\begin{equation}
L(s)=\prod_{p>2}(1-a(p)p^{-s})^{-1}
=\sum_{m=0}^\infty\frac{(-1)^m}{(2m+1)^{s-1}}=L(\chi_{-4},s-1),
\label{eq-L}
\end{equation}
where $\chi_{-4}=\bigl(\frac{-1}{\cdot}\bigr)$ is the nonprincipal modulo 4 character,
and the series evaluates to $\pi/4$ at $s=2$, so that \eqref{eq-HB} transforms into
\begin{equation*}
\sum_{k=0}^\infty(4k+1)\frac{(\frac12)_k^3}{k!^3}\,(-1)^k=\frac{8L(2)}{\pi^2}.
\end{equation*}
For some other recent progress on $q$-congruences, we refer the reader to \cite{Guo19a,Guo19b,Guo19c,Guo18a,GS19,GS18}.

A theme of this note is to give examples of $q$-deformations of hypergeometric evaluations,
which are linked with the coefficients of modular forms rather than Dirichlet characters,
and use these $q$-hypergeometric identities to establish the corresponding (super)congruences.

\section{Hypergeometric identities and congruences}
\label{sec-hyper}

A forward player of our exposition is the hypergeometric evaluation
\begin{equation*}
\sum_{k=0}^\infty\frac{(\frac12)_k^3}{k!^3}
=\frac{\pi}{\Gamma(3/4)^4}.
\end{equation*}
Its right-hand side happens to be a (simple multiple of the) period of the CM modular form
\begin{equation}
f_1(\tau)=q\prod_{m=1}^\infty(1-q^{4m})^6=\sum_{n=1}^\infty a_1(n)q^n,
\quad\text{where}\; q=\exp(2\pi i\tau),
\label{eq-f1}
\end{equation}
of weight~3:
\begin{equation}
\sum_{k=0}^\infty\frac{(\frac12)_k^3}{k!^3}
=\frac{\Gamma(1/2)^2}{\Gamma(3/4)^4}
=\frac{16L(f_1,2)}{\pi^2}
=\frac{8L(f_1,1)}{\pi}
\label{eq-H1}
\end{equation}
(see \cite[Theorem~5]{RWZ15}), where $L(f_1,s)$ denotes the Dirichlet $L$-function of~\eqref{eq-f1}.
This relationship between the hypergeometric series and modular form
is somewhat deeper because of the chain of related congruences
\begin{equation}
\sum_{k=0}^{p-1}\frac{(\frac12)_k^3}{k!^3}
\equiv a_1(p)\pmod{p^2}
\quad\text{for primes}\; p>2
\label{eq-cong1}
\end{equation}
(see \cite{On98,ZHS13}), which link corresponding partial sums with the Fourier coefficients of \eqref{eq-f1}.
The latter can be given explicitly via
\begin{equation}
a_1(p)
=\begin{cases}
2(a^2-b^2) & \text{if $p=a^2+b^2$, $a$ odd}, \\
0 & \text{if $p\equiv3\pmod4$},
\end{cases}
\label{eq-a1}
\end{equation}
and, in turn, satisfy
\begin{equation*}
a_1(p)\equiv\frac{\Gamma_p(1/2)^2}{\Gamma_p(3/4)^4}\pmod{p^2}
\quad\text{for}\; p>2
\end{equation*}
(see \cite[Sect.~1 and Conjecture (A.2)]{VH97}). This visually makes the series in \eqref{eq-f1} a plausible
generating function of the truncations in~\eqref{eq-cong1}.
Furthermore, we remark that \cite{ZHS13}
\begin{equation*}
a_1(p)\equiv\frac{(\frac12)_m^2}{m!^2}\pmod{p^2}
\quad\text{if}\; m=(p-1)/4\in\mathbb Z.
\end{equation*}

We should stress that not every formal $q$-analogue of a hypergeometric summation, $|q|<1$, may suit for application of the general machinery from our earlier work \cite{GZ18b}.
An example of $q$-deforming \eqref{eq-H1} is given by
\begin{equation}
\sum_{k=0}^\infty(1+q^{4k+1})\frac{(q^2;q^4)_k^3}{(q^4;q^4)_k^3}\,q^k
=\frac{(q^2;q^4)_\infty^2(q^3;q^4)_\infty^2}{(q;q^4)_\infty^2(q^4;q^4)_\infty^2}
\label{eq-HqA}
\end{equation}
(apply \cite[eq.~(52)]{GZ18b} with $a=b=c=1$),
where the $q$-notation $(a;q)_k$ stands for $\prod_{j=0}^{k-1}(1-aq^j)$ for $k=0,1,2,\dots,\infty$.
Identity \eqref{eq-HqA} originates however from
\begin{align}
\sum_{k=0}^\infty\frac{1-q^{4k+1}}{1-q}\,\frac{(q^2;q^4)_k^3}{(q^4;q^4)_k^3}\,(-q)^k
&=\frac{(q^2;q^4)_\infty^2(-q^3;q^4)_\infty^2}{(1-q)\,(-q;q^4)_\infty^2(q^4;q^4)_\infty^2}
\nonumber\\
&=\frac{(q^5;q^4)_\infty^4 (q^6;q^8)_\infty^4}{(q;q^2)_\infty^2 (q^4;q^4)_\infty^2}
\label{eq-HqB}
\end{align}
(replace $q$ with $-q$), which makes it a $q$-analogue of Bauer's formula \eqref{eq-HB}.
This `true' origin \eqref{eq-HB} makes the asymptotics of \eqref{eq-HqA} at roots of unity $q$ related to the truncated sums of \eqref{eq-HB} rather than \eqref{eq-H1}.

In this note, we give a $q$-extension of \eqref{eq-H1} that accommodates the related congruences \eqref{eq-cong1} and, by these means, implicitly provides a $q$-generalisation of the generating function \eqref{eq-f1}.
We also provide a similar $q$-extension of the formula
\begin{align}
{}_3F_2\biggl(\begin{matrix} \frac12, \, \frac12, \, \frac12 \\ 1, \, 1 \end{matrix}\biggm|-1\biggr)
=\frac{\Gamma(1/2)^2}{\sqrt2\,\Gamma(5/8)^2\Gamma(7/8)^2}
=\frac{12\sqrt2\,L(f_2,2)}{\pi^2}
=\frac{12L(f_2,1)}{\pi}
\label{eq-H2}
\end{align}
(see \cite[Theorem~5]{RWZ15}), which underlies \eqref{eq-HB} and is attached to the weight~3 CM modular form
\begin{equation*}
f_2(\tau)=\sum_{n=1}^\infty a_2(n)q^n
=q\prod_{m=1}^\infty(1-q^m)^2(1-q^{2m})(1-q^{4m})(1-q^{8m})^2.
\end{equation*}
In this case, we have \cite{On98}
\begin{equation*}
a_2(p)=\begin{cases}
2(a^2-2b^2) & \text{if $p=a^2+2b^2$}, \\
0 & \text{if $p\equiv5,7\pmod8$},
\end{cases}
\end{equation*}
as well as
\begin{equation}
(-1)^{(p-1)/2}\sum_{k=0}^{p-1}\frac{(\frac12)_k^3}{k!^3}(-1)^k
\equiv\frac{\Gamma_p(1/2)^2}{\Gamma_p(5/8)^2\Gamma_p(7/8)^2}
\equiv a_2(p)\pmod{p^2}
\label{eq-cong2}
\end{equation}
for primes $p>2$ and
\begin{equation*}
a_2(p)\equiv\frac{(\frac14)_m^2}{m!^2}\times\begin{cases}
\phantom-1 &\text{if}\; m=(p-1)/8\in\mathbb Z \\
-\frac12 &\text{if}\; m=(3p-1)/8\in\mathbb Z
\end{cases} \pmod{p^2}.
\end{equation*}

\section{A $q$-Clausen identity}
\label{sec-Clausen}

F.\,H.~Jackson's generalisation of Clausen's identity \cite{Ja41} (see also \cite[eq.~(3.2)]{Sch18}) implies
\begin{align}
{}_2\phi_1\biggl[\begin{matrix} qa, \, q/a \\ q^4 \end{matrix}; q^4,z\biggr]
\,{}_2\phi_1\biggl[\begin{matrix} qa, \, q/a \\ q^4 \end{matrix};q^4,q^2z\biggr]
=\sum_{k=0}^\infty\frac{(aq;q^2)_k(q/a;q^2)_k(q^2;q^4)_k}{(q^2;q^2)_k^2(q^4;q^4)_k}\,z^k,
\label{eq:jackson}
\end{align}
where the \emph{basic hypergeometric series} is defined as
$$
{}_{r+1}\phi_{r}\biggl[\begin{matrix}
a_1, \, a_2, \, \dots, \, a_{r+1} \\ b_1, \, b_2, \, \dots, \, b_{r}
\end{matrix}; q,\, z \biggr]
=\sum_{k=0}^{\infty}\frac{(a_1;q)_k(a_2;q)_k\dotsb(a_{r+1};q)_k z^k}
{(q;q)_k(b_1;q)_k(b_2;q)_k\dotsb(b_{r};q)_k}.
$$
Clearly, if one takes $a=1$ and $z=q^2$ in \eqref{eq:jackson}, then the first series on the left is $q$-Gauss-summable,
$$
{}_2\phi_1\biggl[\begin{matrix} q, \, q \\ q^4 \end{matrix}; q^4,q^2\biggr]
=\frac{(q^3,q^3;q^4)_\infty}{(q^2,q^4;q^4)_\infty},
$$
so that this leads to a `natural' reduction of the series on the right and makes no surprise from divisibility
of the result by some cyclotomic polynomials
$$
\Phi_n(q)=\prod_{\substack{j=1\\ \gcd(j,n)=1}}^n(q-e^{2\pi ij/n}).
$$

\begin{theorem}
\label{th1}
For any positive integer $n$, we have
\begin{align}
\sum_{k=0}^{n-1}\frac{(q;q^4)_k^2}{(q^4;q^4)_k^2}\,q^{2k}
\equiv
\begin{cases}
\dfrac{(q^{3};q^4)_{(n-1)/4}}{(q^4;q^4)_{(n-1)/4}}\pmod{\Phi_n(q)} &\text{if $n\equiv 1\pmod{4}$}, \\[3pt]
0 \pmod{\Phi_n(q)} &\text{if $n\equiv 3\pmod{4}$}.
\end{cases}
\label{eq:114-1}
\end{align}
\end{theorem}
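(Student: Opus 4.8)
The plan is to work modulo $\Phi_n(q)$ and to use the relation $q^n\equiv1\pmod{\Phi_n(q)}$ twice; the $q$-Clausen identity \eqref{eq:jackson} plays no role here (it is needed only for the subsequent $q$-deformation of \eqref{eq-H1}). Since $n$ is odd in both cases, $\gcd(4,n)=1$, so there is a unique integer $N$ with $0\le N\le n-1$ and $n\mid4N+1$; explicitly $N=(n-1)/4$ if $n\equiv1\pmod4$ and $N=(3n-1)/4$ if $n\equiv3\pmod4$, and $N\le n-1$ in either case. First I would record that $(q^4;q^4)_k$ is coprime to $\Phi_n(q)$ for $0\le k\le n-1$, because $\Phi_n(q)\mid1-q^{4j}$ forces $n\mid j$; this lets all subsequent congruences be cleared of such Pochhammer denominators.

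From $q^n\equiv1$ and $n\mid4N+1$ one gets $q^{4N+1}\equiv1$, hence $1-q^{4j+1}\equiv1-q^{4(j-N)}\pmod{\Phi_n(q)}$ for every $j$, and therefore $(q;q^4)_k\equiv(q^{-4N};q^4)_k\pmod{\Phi_n(q)}$ for all $k\ge0$. Replacing one of the two factors $(q;q^4)_k$ in the summand of \eqref{eq:114-1} accordingly, I obtain
\begin{equation*}
\sum_{k=0}^{n-1}\frac{(q;q^4)_k^2}{(q^4;q^4)_k^2}\,q^{2k}
\equiv\sum_{k=0}^{n-1}\frac{(q;q^4)_k\,(q^{-4N};q^4)_k}{(q^4;q^4)_k^2}\,q^{2k}
=\sum_{k=0}^{N}\frac{(q;q^4)_k\,(q^{-4N};q^4)_k}{(q^4;q^4)_k^2}\,q^{2k}
\pmod{\Phi_n(q)},
\end{equation*}
since for $k>N$ the factor $(q^{-4N};q^4)_k$ contains $1-q^{0}=0$. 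This collapses the $n$-term sum to an $(N+1)$-term one.

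Next I would apply $q^n\equiv1$ once more, in the form $q^{2k}\equiv q^{(4N+3)k}\pmod{\Phi_n(q)}$ (valid because $q^{(4N+1)k}\equiv1$), so that modulo $\Phi_n(q)$ the truncated sum equals the terminating basic hypergeometric series
\begin{equation*}
\sum_{k=0}^{N}\frac{(q;q^4)_k\,(q^{-4N};q^4)_k}{(q^4;q^4)_k^2}\,q^{(4N+3)k}
={}_2\phi_1\biggl[\begin{matrix} q^{-4N}, \, q \\ q^4 \end{matrix};q^4,\,q^{4N+3}\biggr].
\end{equation*}
Here $q^{4N+3}$ is exactly the argument at which the terminating $q$-Chu--Vandermonde summation applies (base $q^4$, upper parameters $q^{-4N}$ and $q$, lower parameter $q^4$; this is the terminating case of the $q$-Gauss sum), and it evaluates the series to $(q^4/q;q^4)_N/(q^4;q^4)_N=(q^3;q^4)_N/(q^4;q^4)_N$. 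Thus
\begin{equation*}
\sum_{k=0}^{n-1}\frac{(q;q^4)_k^2}{(q^4;q^4)_k^2}\,q^{2k}\equiv\frac{(q^3;q^4)_N}{(q^4;q^4)_N}\pmod{\Phi_n(q)}.
\end{equation*}

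It remains to read off the two cases. If $n\equiv1\pmod4$ then $N=(n-1)/4$, and the right-hand side is precisely the asserted closed form. If $n\equiv3\pmod4$ then $N=(3n-1)/4$, and $(q^3;q^4)_N=\prod_{j=0}^{(3n-5)/4}(1-q^{4j+3})$ contains the factor $1-q^{n}$ (at $j=(n-3)/4$), which is divisible by $\Phi_n(q)$; since $(q^4;q^4)_N$ is coprime to $\Phi_n(q)$, the quotient is $\equiv0$, as claimed. The only place where the argument is not completely routine is organising the two uses of $q^n\equiv1$ correctly: the first one replaces $(q;q^4)_k$ by $(q^{-4N};q^4)_k$ and so truncates the sum to its first $N+1$ terms, while the second one moves the summation argument from $q^2$ to the $q$-Chu--Vandermonde evaluation point $q^{4N+3}$ (note $q^{4N+3}\equiv q^2\pmod{\Phi_n(q)}$); one must also keep track of coprimality with $\Phi_n(q)$ throughout. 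Once the value of $N$ is identified, everything else is a routine verification.
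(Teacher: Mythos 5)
Your proof is correct and takes essentially the same route as the paper's: exploit $q^n\equiv1\pmod{\Phi_n(q)}$ to turn one of the factors $(q;q^4)_k$ into a terminating one, truncate the sum, and evaluate the resulting terminating ${}_2\phi_1$ by the $q$-Chu--Vandermonde formula \eqref{eq:q-Chu}, with the closed form giving the stated quotient for $n\equiv1\pmod4$ and containing a factor divisible by $\Phi_n(q)$ for $n\equiv3\pmod4$. The only cosmetic difference is that the paper shifts both copies of $(q;q^4)_k$ symmetrically (a congruence modulo $\Phi_n(q)^2$ quoted from \cite{GS18}) so the evaluation point remains $q^2$, whereas you shift one copy and move the argument to $q^{4N+3}\equiv q^2$, which is self-contained and suffices modulo $\Phi_n(q)$.
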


\begin{proof}
From the proof of \cite[Theorem 4.1]{GS18} we know that
\begin{align*}
(q;q^4)_k^2\equiv (q^{1-3n};q^4)_k (q^{1+3n};q^4)_k \pmod{\Phi_n(q)^2}.
\end{align*}
Therefore, for $n\equiv 3\pmod{4}$ we have, modulo $\Phi_n(q)^2$,
\begin{align*}
\sum_{k=0}^{n-1}\frac{(q;q^4)_k^2}{(q^4;q^4)_k^2}\,q^{2k}
\equiv \sum_{k=0}^{(3n-1)/4}\frac{(q^{1-3n};q^4)_k (q^{1+3n};q^4)_k}{(q^4;q^4)_k^2}\,q^{2k}
=\frac{(q^{3-3n};q^4)_{(3n-1)/4}}{(q^4;q^4)_{(3n-1)/4}}
\end{align*}
by the $q$-Chu--Vandermonde summation formula \cite[Appendix (II.7)]{GR04}:
\begin{align}
{}_{2}\phi_{1}\biggl[\begin{matrix} a, \, q^{-n} \\ c \end{matrix}; q, \, \frac{cq^n}{a}\biggr]
=\frac{(c/a;q)_n}{(c;q)_n}.
\label{eq:q-Chu}
\end{align}
The proof then follows from the fact that $(q^{3-3n};q^4)_{(3n-1)/4}$ contains the factor $1-\nobreak q^{-2n}$.

Similarly, for $n\equiv 1\pmod{4}$, the result follows from
\begin{equation*}
(q;q^4)_k^2\equiv (q^{1-n};q^4)_k (q^{1+n};q^4)_k \pmod{\Phi_n(q)^2}.
\qedhere
\end{equation*}
\end{proof}

Making the substitution $q\to q^{-1}$ in \eqref{eq:114-1}, we obtain
\begin{align*}
\sum_{k=0}^{n-1}\frac{(q;q^4)_k^2}{(q^4;q^4)_k^2}\,q^{4k}
\equiv\begin{cases}
q^{(n^2-1)/4}\dfrac{(q^{3};q^4)_{(n-1)/4}}{(q^4;q^4)_{(n-1)/4}}\pmod{\Phi_n(q)} &\text{if $n\equiv 1\pmod{4}$}, \\
0 \pmod{\Phi_n(q)} &\text{if $n\equiv 3\pmod{4}$}.
\end{cases}
\end{align*}

Remarkably, the truncations
$$
\sum_{k=0}^{n-1}\frac{(q;q^2)_k^2(q^2;q^4)_k}{(q^2;q^2)_k^2(q^4;q^4)_k}\,q^{2k}
$$
of the series on the right-hand side in \eqref{eq:jackson} when $a=1$ and $z=q^2$
are divisible by more cyclotomic polynomials, always squared, than the corresponding sums in \eqref{eq:114-1}
for all $n\ge2$; the congruence
$$
\sum_{k=0}^{n-1}\frac{(q;q^2)_k^2(q^2;q^4)_k}{(q^2;q^2)_k^2(q^4;q^4)_k}\,q^{2k}
\equiv0\pmod{\Phi_n(q)^2}
$$
for $n\equiv3\pmod4$, not necessarily prime, discussed in \cite[Corollary 1.2]{GZ15} is a particular instance of this high divisibility.
We numerically observe that
$$
\sum_{k=0}^{n-1}\frac{(q;q^2)_k^2(q^2;q^4)_k}{(q^2;q^2)_k^2(q^4;q^4)_k}\,q^{2k}
\equiv\begin{cases}
q^{(n-1)/2}\dfrac{(q^2;q^2)_{(n-1)/2}^2}{(q^4;q^4)_{(n-1)/4}^4} &\text{if}\; n\equiv1\pmod4, \\[2.5pt]
0 &\text{if}\; n\equiv3\pmod4,
\end{cases}
$$
is true not just modulo $\Phi_n(q)^2$ but also modulo $\prod_{\ell\equiv3\pmod4,\,\ell\mid n}\Phi_\ell(q)^2$ for $n\equiv1\pmod4$ and even modulo $\prod_{\ell\equiv3\pmod4,\,\ell<n}\Phi_\ell(q)^2$ if $n\equiv3\pmod4$.

\begin{theorem}
\label{thm:moreover}
Modulo $\Phi_n(q)^2$,
\begin{align}
\sum_{k=0}^{n-1}\frac{(q;q^2)_k^2(q^2;q^4)_k}{(q^2;q^2)_k^2(q^4;q^4)_k}\,q^{2k}
\equiv\begin{cases}
q^{(n-1)/2}\dfrac{(q^2;q^4)_{(n-1)/4}^2}{(q^4;q^4)_{(n-1)/4}^2} &\text{if}\; n\equiv1\pmod4, \\[2.5pt]
0 &\text{if}\; n\equiv3\pmod4.
\end{cases}
\label{eq:mod-phi}
\end{align}
\end{theorem}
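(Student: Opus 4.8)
The plan is to prove \eqref{eq:mod-phi} by ``creative microscoping'', reinstating in the $q$-Clausen identity \eqref{eq:jackson} the free parameter $a$ that degenerates to $1$ in the sum of interest. Thus I replace $(q;q^2)_k^2$ by $(aq;q^2)_k(q/a;q^2)_k$ and put
\begin{equation*}
F(a)=\sum_{k=0}^{n-1}\frac{(aq;q^2)_k\,(q/a;q^2)_k\,(q^2;q^4)_k}{(q^2;q^2)_k^2\,(q^4;q^4)_k}\,q^{2k},
\end{equation*}
a Laurent polynomial in $a$, invariant under $a\mapsto1/a$, whose $q$-denominators (products of $(q^2;q^2)_k$ and $(q^4;q^4)_k$ with $k\le n-1$) are coprime to $\Phi_n(q)$. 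Since $F(1)$ is the left-hand side of \eqref{eq:mod-phi}, it suffices to show $F(q^n)=F(q^{-n})=R$ with $R$ the conjectured right-hand side: then $F(a)-R$ vanishes at the two distinct points $a=q^{\pm n}$, hence is divisible by the monic quadratic $(a-q^n)(a-q^{-n})$, and specializing $a=1$ makes $F(1)-R$ divisible by $(1-q^n)(1-q^{-n})=-q^{-n}(1-q^n)^2$, hence by $\Phi_n(q)^2$ (here $q^{-n}$ and $(1-q^n)/\Phi_n(q)$ are units modulo $\Phi_n(q)$).

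The heart of the matter is the evaluation of $F(q^n)$, and I would first treat $n\equiv1\pmod{4}$, writing $m=(n-1)/4$. Setting $a=q^n$, the factor $(q/a;q^2)_k=(q^{1-n};q^2)_k$ picks up the genuine zero $1-q^0$ (from the index $j=(n-1)/2$) as soon as $k>(n-1)/2=2m$, so $F(q^n)$ collapses to its first $2m+1$ terms --- exactly the terms that survive on the right-hand side of \eqref{eq:jackson} at $a=q^n$, $z=q^2$. Therefore \eqref{eq:jackson} yields $F(q^n)={}_2\phi_1[q^{n+1},q^{1-n};q^4;q^4,q^2]\cdot{}_2\phi_1[q^{n+1},q^{1-n};q^4;q^4,q^4]$, a product of two \emph{terminating} series in base $q^4$ (since $q^{1-n}=q^{-4m}$, each terminates at $k=m$). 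The two standard forms of $q$-Chu--Vandermonde, \cite[Appendix, (II.6) and (II.7)]{GR04}, evaluate them to $(q^{3-n};q^4)_m/(q^4;q^4)_m$ and $q^{m(n+1)}(q^{3-n};q^4)_m/(q^4;q^4)_m$ respectively; a routine $q$-Pochhammer reversal turns $(q^{3-n};q^4)_m$ into $(-1)^mq^{-2m^2}(q^2;q^4)_m$, and with $n=4m+1$ the product collapses to $q^{2m}(q^2;q^4)_m^2/(q^4;q^4)_m^2=R$. The symmetry $F(q^{-n})=F(q^n)$ then closes the case via the first paragraph.

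For $n\equiv3\pmod{4}$ the right-hand side of \eqref{eq:mod-phi} is $0$, and the congruence $\sum_{k=0}^{n-1}(\dotsb)\equiv0\pmod{\Phi_n(q)^2}$ is exactly \cite[Corollary~1.2]{GZ15} (the ``high divisibility'' instance recalled just before the theorem), so nothing more is needed. (One could also run the same microscoping here: now $q^{1-n}$ is not a nonnegative power of $q^4$, but ${}_2\phi_1[q^{n+1},q^{1-n};q^4;q^4,q^2]$ is $q$-Gauss-summable and equals a $q$-infinite product carrying the vanishing factor $(q^{3-n};q^4)_\infty$, which forces $F(q^n)=0$.)

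I expect the evaluation of $F(q^n)$ for $n\equiv1\pmod{4}$ to be the one delicate point: one must check that $a=q^n$ simultaneously (i) truncates the sum at the correct index, (ii) turns the $q$-Clausen product into two \emph{terminating} ${}_2\phi_1$ series rather than non-terminating ones, and (iii) that the two inequivalent $q$-Chu--Vandermonde normalizations --- one with no extra power of $q$, the other with $q^{m(n+1)}$ --- combine with the $q^{-2m^2}$ coming from $(q^{3-n};q^4)_m$ to leave precisely $q^{2m}=q^{(n-1)/2}$ and no stray sign or constant. The coprimality of the $q$-denominators to $\Phi_n(q)$ and the final divisibility-and-specialization step are routine.
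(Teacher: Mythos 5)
Your proof is correct, and its overall frame is exactly the paper's: you introduce the same parametric sum as in \eqref{eq:with-a}, evaluate it at $a=q^{\pm n}$, and recover \eqref{eq:mod-phi} by the standard divisibility-and-specialization step at $a=1$ (the paper compresses this into ``letting $a\to1$''). Where you diverge is in the key evaluation. The paper converts the specialized sum into the terminating ${}_4\phi_3$ of \eqref{eq:with-a-qn} and applies Andrews' terminating $q$-analogue of Watson's formula \cite{An76}, which treats $n\equiv1\pmod4$ and $n\equiv3\pmod4$ uniformly (the latter case coming out as $0$ because the relevant ``$m$'' is odd), so the whole theorem is self-contained. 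You instead factor $F(q^{\pm n})$ through Jackson's $q$-Clausen identity \eqref{eq:jackson} into two terminating ${}_2\phi_1$ series summed by the two normalizations of $q$-Chu--Vandermonde \cite[(II.6), (II.7)]{GR04}; your bookkeeping of the powers of $q$ (the reversal $(q^{3-n};q^4)_m=(-1)^mq^{-2m^2}(q^2;q^4)_m$ against $q^{m(n+1)}$, leaving $q^{(n-1)/2}$) checks out. This is essentially the alternative argument the authors record in the Remark following Theorem~\ref{thm:moreover}, transplanted to the parametric level, which is a perfectly valid variant. For $n\equiv3\pmod4$ you fall back on \cite[Corollary~1.2]{GZ15}, which is legitimate (the paper itself points out that this case is contained there), but it makes your proof non-self-contained where the paper's is; your parenthetical $q$-Gauss argument (the factor $(q^{3-n};q^4)_\infty$ vanishes) does repair this, at the cost of handling genuinely non-terminating series, where one should say a word about convergence for $|q|<1$ and about why the resulting analytic vanishing of the Laurent polynomial $F(q^n)$ gives identical vanishing. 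In short: same microscoping skeleton, different summation engine --- Andrews' $q$-Watson buys uniformity across both residue classes, while your Clausen plus Chu--Vandermonde route is more elementary for $n\equiv1\pmod4$ but needs outside help (or extra analytic care) for the vanishing case.
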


\begin{proof}
Although the method of proving \cite[Corollary 1.2]{GZ15} can also be used to establish \eqref{eq:mod-phi}, here we give
a somewhat different argument.
We shall demonstrate a parametric generalisation of \eqref{eq:mod-phi}, namely, that
\begin{align}
\sum_{k=0}^{n-1}\frac{(aq;q^2)_k (q/a;q^2)_k (q^2;q^4)_k}{(q^2;q^2)_k^2(q^4;q^4)_k}\,q^{2k}
\equiv\begin{cases}
q^{(n-1)/2}\dfrac{(q^2;q^4)_{(n-1)/4}^2}{(q^4;q^4)_{(n-1)/4}^2} &\text{if}\; n\equiv1\pmod4, \\[2.5pt]
0 &\text{if}\; n\equiv3\pmod4,
\end{cases}
\label{eq:with-a}
\end{align}
holds true modulo $(1-aq^n)(a-q^n)$.
For $a=q^{-n}$ or $a=q^n$, the left-hand side of \eqref{eq:with-a} is equal to
\begin{align}
\sum_{k=0}^{(n-1)/2}\frac{(q^{1-n};q^2)_k (q^{1+n};q^2)_k (q^2;q^4)_k}{(q^2;q^2)_k^2(q^4;q^4)_k}\,q^{2k}
={}_{4}\phi_{3}\biggl[\begin{matrix} q^{1-n}, \, q^{n+1}, \, q, \, -q \\
q^2,\, -q^2,\, q^2\end{matrix}; q^2, \, q^2\biggr].
\label{eq:with-a-qn}
\end{align}
By Andrews' terminating $q$-analogue of Watson's formula (see \cite{An76} or \cite[Appendix (II.17)]{GR04}),
\begin{align*} 
{}_{4}\phi_{3}\biggl[\begin{matrix} q^{-m}, \, a^{2}q^{m+1}, \, b, \, -b \\
aq,\, -aq,\, b^{2}\end{matrix}; q, \, q\biggr]
=\begin{cases}
\dfrac{b^{m}(q, a^{2}q^{2}/b^{2}; q^{2})_{m/2}}{(a^{2}q^{2},\, b^{2}q; q^{2})_{m/2}} &\text{if $m$ is even}, \\[2.5pt]
0 &\text{if $m$ is odd},
\end{cases}
\end{align*}
we conclude that the right-hand side of \eqref{eq:with-a-qn} is just that of \eqref{eq:with-a}.
Finally, letting $a\to1$ in \eqref{eq:with-a}, we are led to \eqref{eq:mod-phi}.
\end{proof}

\begin{remark}
The $n\equiv 3\pmod{4}$ case can also be deduced from  \cite[Theorem 1.1]{Guo18a}. For $n\equiv 1\pmod{4}$,
we can also prove \eqref{eq:mod-phi} without using Andrews' formula as follows.
By \eqref{eq:jackson} and \eqref{eq:q-Chu}, we have, modulo $\Phi_n(q)^2$,
\begin{align*}
\sum_{k=0}^{n-1}\frac{(q;q^2)_k^2(q^2;q^4)_k}{(q^2;q^2)_k^2(q^4;q^4)_k}\,q^{2k}
&\equiv\sum_{k=0}^{n-1}\frac{(q^{1-n};q^2)_k (q^{1+n};q^2)_k(q^2;q^4)_k}{(q^2;q^2)_k^2(q^4;q^4)_k}\,q^{2k}
\\
&=\sum_{k=0}^{n-1}\frac{(q^{1-n};q^4)_k (q^{1+n};q^4)_k}{(q^4;q^4)_k^2}\,q^{2k}
\\ &\qquad\times
\sum_{k=0}^{n-1}\frac{(q^{1-n};q^4)_k (q^{1+n};q^4)_k}{(q^4;q^4)_k^2}\,q^{4k}
\\
&=q^{(n^2-1)/4}\frac{(q^{3-n};q^4)_{(n-1)/4}^2}{(q^4;q^4)_{(n-1)/4}^2},
\end{align*}
which is in fact the same as \eqref{eq:mod-phi}.
\end{remark}

The consideration above corresponds to a $q$-deformation of \eqref{eq-cong1};
our $q$-analogue of \eqref{eq-cong2} is somewhat similar but weaker.

We first prove the following result.

\begin{theorem}\label{thm:57-1}
For any positive odd integer $n$, we have, modulo $\Phi_n(q)$,
\begin{align}
\sum_{k=0}^{n-1}\frac{(q;q^4)_k^2 } {(q^4;q^4)_k^2}(-q)^{3k}
\equiv\begin{cases}
\dfrac{(q^5,q^7;q^8)_{(n-1)/8} }{(q^4;q^4)_{(n-1)/4}} &\text{if $n\equiv 1\pmod{8}$}, \\[2.5pt]
\dfrac{(q^5,q^7;q^8)_{(3n-1)/8} } {(q^4;q^4)_{(3n-1)/4}} &\text{if $n\equiv 3\pmod{8}$}, \\[2.5pt]
0,&\text{if $n\equiv 5,7\pmod{8}$}.
\end{cases}
\label{eq:8k-1357}
\end{align}
\end{theorem}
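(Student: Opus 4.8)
The plan is to mimic the strategy behind Theorems~\ref{th1} and~\ref{thm:moreover}: shift the parameters in $(q;q^4)_k^2$ so that the truncated sum becomes a genuinely terminating basic hypergeometric series modulo $\Phi_n(q)$, evaluate that terminating series in closed form, and read off \eqref{eq:8k-1357} after reducing modulo $\Phi_n(q)$. As a guide, note that $(-q)^{3k}=(-q^3)^k$, so the left-hand side of \eqref{eq:8k-1357} is the partial sum $\sum_{k=0}^{n-1}$ of the series
\begin{align*}
{}_{2}\phi_{1}\biggl[\begin{matrix} q, \, q \\ q^4 \end{matrix}; q^4, \, -q^3\biggr]
=\frac{(-q^4;q^4)_\infty\,(q^5,q^7;q^8)_\infty}{(-q^3;q^4)_\infty\,(q^4;q^4)_\infty},
\end{align*}
the evaluation being the Bailey--Daum ($q$-Kummer) summation; the factor $(q^5,q^7;q^8)_\infty$ and the modulus~$8$ on the right already foreshadow the shape of \eqref{eq:8k-1357}.

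To carry this out modulo $\Phi_n(q)$, set $c=1$ if $n\equiv1\pmod{4}$ and $c=3$ if $n\equiv3\pmod{4}$, so that $cn\equiv1\pmod{4}$. Since $q^{\pm cn}\equiv1\pmod{\Phi_n(q)}$ we have $(q;q^4)_k^2\equiv(q^{1-cn};q^4)_k(q^{1+cn};q^4)_k\pmod{\Phi_n(q)}$, just as in the proof of Theorem~\ref{th1}; but now $(q^{1-cn};q^4)_k$ contains the honest factor $1-q^{0}$ for every $k>N:=(cn-1)/4$, and $N\le n-1$, so
\begin{align*}
\sum_{k=0}^{n-1}\frac{(q;q^4)_k^2}{(q^4;q^4)_k^2}(-q)^{3k}
\equiv\sum_{k=0}^{N}\frac{(q^{1-cn};q^4)_k(q^{1+cn};q^4)_k}{(q^4;q^4)_k^2}(-q^3)^k
\pmod{\Phi_n(q)}.
\end{align*}
The series on the right is a terminating ${}_{2}\phi_{1}$ of base $q^4$ with argument $-q^3$, top parameters $q^{1-cn}=q^{-4N}$ and $q^{1+cn}=q^{4N+2}$, and bottom parameter $q^4$.

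The middle step is to sum this terminating series. Its argument $-q^3$ makes it a $q$-analogue of a Bailey ``${}_2F_1(-1)$'' evaluation, so one should expect a closed product form that vanishes identically when $N$ is odd and that reduces, modulo $\Phi_n(q)$, to $(q^5,q^7;q^8)_{N/2}\big/(q^4;q^4)_N$ when $N$ is even --- the parity dichotomy being exactly the mechanism that produces the vanishing cases of \eqref{eq:8k-1357}. Granting such an evaluation, one unwinds the arithmetic: $N=(n-1)/4$ is even precisely when $n\equiv1\pmod{8}$ and odd precisely when $n\equiv5\pmod{8}$, while $N=(3n-1)/4$ is even precisely when $n\equiv3\pmod{8}$ and odd precisely when $n\equiv7\pmod{8}$; in the two even cases $N/2$ equals $(n-1)/8$ and $(3n-1)/8$ respectively, which gives \eqref{eq:8k-1357} in all four residue classes.

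The hard part is precisely that closed-form evaluation. In contrast to Theorem~\ref{th1}, the argument $-q^3$ (base $q^4$) is a ``$z=-1$'' rather than a ``$z=1$'' specialization, so the $q$-Chu--Vandermonde formula \eqref{eq:q-Chu} does not apply; one must instead locate the correct terminating $q$-summation --- an Andrews-type $q$-analogue of Watson's or Whipple's theorem, in the same circle as the formula used in the proof of Theorem~\ref{thm:moreover} --- match its parameters (in particular the $\pm b$ pair responsible both for doubling the base from $q^4$ to $q^8$ and for the vanishing when $N$ is odd), and keep careful track of the $q$-power prefactor so as to confirm that its exponent is divisible by~$n$. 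Organising this uniformly over the two choices $c\in\{1,3\}$ and the two parities of~$N$ is where essentially all the work lies; everything else is routine.
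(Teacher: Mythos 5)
Your reduction is sound as far as it goes (shifting both numerator parameters, $(q;q^4)_k^2\equiv(q^{1-cn};q^4)_k(q^{1+cn};q^4)_k\pmod{\Phi_n(q)}$ with $cn\equiv1\pmod4$, and truncating at $N=(cn-1)/4$), but the proof has a genuine gap at exactly the point you label ``the hard part'': you never produce, nor even precisely identify, a closed-form evaluation of the resulting terminating ${}_2\phi_1$ with base $q^4$, upper parameters $q^{-4N}$ and $q^{4N+2}$, lower parameter $q^4$ and argument $-q^3$. This series is \emph{not} of Bailey--Daum type (that formula would force the lower parameter $q^{2-8N}$ and argument $-q^{2-4N}$), it is not an evident specialisation of Andrews' $q$-Watson or $q$-Whipple ${}_4\phi_3$'s, and your anticipated shape of the evaluation is in fact wrong: for $n=5$ (so $c=1$, $N=1$ odd) the doubly shifted sum equals $(1+q)(1-q^5)/\bigl(q(1-q^4)\bigr)$, which is divisible by $\Phi_5(q)$ but certainly not identically zero, so the parity dichotomy you plan to rely on for the cases $n\equiv5,7\pmod8$ is not how the vanishing arises. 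Since all of the content of \eqref{eq:8k-1357} resides in that evaluation, ``granting'' it leaves the theorem unproved.

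The paper's route shows how to avoid needing any new summation: instead of shifting both upper parameters, shift one upper parameter \emph{and} the lower parameter, using $(q;q^4)_k/(q^4;q^4)_k\equiv(q^{1-cn};q^4)_k/(q^{4-cn};q^4)_k\pmod{\Phi_n(q)}$ while leaving the other factor $(q;q^4)_k/(q^4;q^4)_k$ untouched. The truncated sum then becomes the terminating series ${}_2\phi_1$ with upper parameters $q^{1-cn}$, $q$, lower parameter $q^{4-cn}$, base $q^4$ and argument $-q^3$, which is exactly a terminating instance of the Bailey--Daum ($q$-Kummer) series you quoted at the outset ($a=q^{1-cn}$, $b=q$, lower parameter $aq^4/b$, argument $-q^4/b$). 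Applying \eqref{eq:qkummer}, the infinite products truncate to $(q^{5-cn},q^{7-cn};q^8)_{(cn-1)/8}/(q^{4-cn};q^4)_{(cn-1)/4}$ when $cn\equiv1\pmod8$, and using $q^{cn}\equiv1\pmod{\Phi_n(q)}$ this reduces to the stated right-hand sides for $n\equiv1,3\pmod8$; for $n\equiv5,7\pmod8$ the same product contains a literal zero factor, which gives the vanishing cases directly. So the key idea you are missing is the choice of shift that keeps the argument $-q^3$ while matching the Bailey--Daum parameter constraint; with that choice, the formula you already invoked finishes the proof.
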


\begin{proof}
We use the $q$-Kummer (Bailey--Daum) summation formula \cite[Appendix (II.9)]{GR04}:
\begin{align}
 {}_{2}\phi_{1}\biggl[\begin{matrix} a, \, b \\ aq/b \end{matrix}; q, \, \frac{-q}{b}\biggr]
=\frac{(-q;q)_\infty(aq;q^2)_\infty (aq^2/b^2;q^2)_\infty}{(-q/b;q)_\infty (aq/b;q)_\infty}.
\label{eq:qkummer}
\end{align}
For $n\equiv 1\pmod{8}$, by \eqref{eq:qkummer} we obtain
\begin{align}
{}_{2}\phi_{1}\biggl[\begin{matrix} q^{1-n}, \, q \\ q^{4-n}\end{matrix}; q^4, \, -q^3\biggr]
&=\frac{(-q^4;q^4)_\infty(q^{5-n};q^8)_\infty (q^{7-n};q^8)_\infty}{(-q^3;q^4)_\infty (q^{4-n};q^4)_\infty}
\nonumber\\
&=\frac{(q^{5-n};q^8)_{(n-1)/8} (q^{7-n};q^8)_{(n-1)/8}}{(q^{4-n};q^4)_{(n-1)/4}}.
\label{eq:8k-1}
\end{align}
Similarly, for $n\equiv 3\pmod{8}$, we have
\begin{align}
{}_{2}\phi_{1}\biggl[\begin{matrix} q^{1-3n}, \, q \\ q^{4-3n}\end{matrix}; q^4, \, -q^3\biggr]
=\frac{(q^{5-3n};q^8)_{(3n-1)/8} (q^{7-3n};q^8)_{(3n-1)/8}}{(q^{4-3n};q^4)_{(3n-1)/4}}.
\label{eq:8k-3}
\end{align}
Furthermore,
\begin{align}
\sum_{k=0}^{n-1}\frac{(q^{1-n};q^4)_k (q;q^4)_k} {(q^4;q^4)_k (q^{4-n};q^4)_k}(-q)^{3k}&=0
\quad\text{for} \; n\equiv 5\pmod{8},
\label{eq:8k-5}
\\
\sum_{k=0}^{n-1}\frac{(q^{1-3n};q^4)_k (q;q^4)_k} {(q^4;q^4)_k (q^{4-3n};q^4)_k}(-q)^{3k}&=0
\quad\text{for} \; n\equiv 7\pmod{8}.
\label{eq:8k-7}
\end{align}
Since $q^{3n}\equiv q^n\equiv 1\pmod{\Phi_n(q)}$, we deduce the desired $q$-congruences \eqref{eq:8k-1357} from \eqref{eq:8k-1}--\eqref{eq:8k-7}.
\end{proof}

We complement Theorem \ref{thm:57-1} with the following related result.

\begin{theorem}\label{thm:57-2}
For any positive odd integer $n$, we have, modulo $\Phi_n(q)$,
\begin{align*}
&\sum_{k=0}^{n-1}\frac{(q;q^4)_k^2 } {(q^4;q^4)_k^2}(-q)^{k}
\\
&\quad\equiv\begin{cases}
\phantom-\dfrac{(q,q^2,-q^3,-q^4;q^4)_{(n-1)/8} } {(q^4;q^4)_{(n-1)/4}}q^{(1-n^2)/8} &\text{if $n\equiv 1\pmod{8}$},\\[7.5pt]
\phantom-\dfrac{(q,q^2,-q^3,-q^4;q^4)_{(3n-1)/8} } {(q^4;q^4)_{(3n-1)/4}}q^{(1-n^2)/8} &\text{if $n\equiv 3\pmod{8}$},\\[7.5pt]
-\dfrac{(1-q^2)(-q^3,-q^4,q^5,q^6;q^4)_{(n-5)/8}} {(q^4;q^4)_{(n-5)/4}}q^{(9-n^2)/8} &\text{if $n\equiv 5\pmod{8}$},\\[7.5pt]
-\dfrac{(1-q^2)(-q^3,-q^4,q^5,q^6;q^4)_{(3n-5)/8}} {(q^4;q^4)_{(3n-5)/4}}q^{(9-n^2)/8} &\text{if $n\equiv 7\pmod{8}$}.
\end{cases}
\end{align*}
\end{theorem}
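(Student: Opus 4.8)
I would mimic the proof of Theorem~\ref{thm:57-1}, the eight residues splitting first according to $n\bmod4$: for $n\equiv1\pmod4$ (i.e.\ $n\equiv1,5\pmod8$) one uses the substitution $q\to q^{1-n}$, and for $n\equiv3\pmod4$ (i.e.\ $n\equiv3,7\pmod8$) one uses $q\to q^{1-3n}$. The first step is to reduce the truncated sum to a terminating basic hypergeometric series modulo $\Phi_n(q)$: since $q^n\equiv1\pmod{\Phi_n(q)}$ (so $q^{1-n}\equiv q$, $q^{4-n}\equiv q^4$), replacing one factor $(q;q^4)_k$ by $(q^{1-n};q^4)_k$ and one factor $(q^4;q^4)_k$ by $(q^{4-n};q^4)_k$ (respectively by $(q^{1-3n};q^4)_k$ and $(q^{4-3n};q^4)_k$) changes nothing modulo $\Phi_n(q)$, while the new series terminates at $k=(n-1)/4$ (respectively $k=(3n-1)/4$) and the omitted terms are $\equiv0\pmod{\Phi_n(q)}$ because $(q;q^4)_k$ then contains a factor divisible by $\Phi_n(q)$. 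This identifies the left-hand side, modulo $\Phi_n(q)$, with ${}_2\phi_1\bigl[q^{1-n},q;q^{4-n};q^4,-q\bigr]$, respectively ${}_2\phi_1\bigl[q^{1-3n},q;q^{4-3n};q^4,-q\bigr]$.

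The crux is to evaluate this terminating ${}_2\phi_1$. In Theorem~\ref{thm:57-1} the argument $-q^3=-q^4/q$ fits the $q$-Kummer (Bailey--Daum) formula \eqref{eq:qkummer} for base $q^4$ directly, but the present argument $-q$ does not, so a transformation must intervene first. I would apply a classical transformation of the terminating series --- reversal of the order of summation, a Heine transformation, or a quadratic transformation --- to rewrite it, up to an explicit prefactor equal to a power of $q$ times a sign, in a form to which \eqref{eq:qkummer} applies, and then sum it by \eqref{eq:qkummer}. An alternative would be to imitate the proof of Theorem~\ref{thm:moreover}: establish a version with a free parameter $a$ modulo $(1-aq^n)(a-q^n)$ by specialising $a=q^{\pm n}$, where the series terminates and the relevant summation applies, and then letting $a\to1$.

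It remains to pass from the infinite $q$-products produced by \eqref{eq:qkummer} to the finite ones in the statement, exactly as in Theorem~\ref{thm:57-1}: using $q^n\equiv1$ (and $q^{3n}\equiv1$) modulo $\Phi_n(q)$ one truncates each Pochhammer against its tail. At this point the four residues modulo $8$ separate by the parity of the termination index $(n-1)/4$ or $(3n-1)/4$, which is even for $n\equiv1,3\pmod8$ and odd for $n\equiv5,7\pmod8$; in the odd case the evaluation lowers the index by one and splits off an extra factor, which is what supplies the leading $1-q^2$ and the shifted entries $-q^3,-q^4,q^5,q^6$ there, while the transformation prefactor accounts for the powers $q^{(1-n^2)/8}$ and $q^{(9-n^2)/8}$.

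The real obstacle is the middle step: pinning down the transformation that makes the argument $-q$ amenable to \eqref{eq:qkummer} without destroying termination, and then tracking the prefactor and the truncation of the products carefully enough that the factor $1-q^2$ and the precise powers of $q$ emerge correctly in each of the four residue classes.
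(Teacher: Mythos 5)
Your opening reduction---replacing $(q;q^4)_k(q;q^4)_k/(q^4;q^4)_k$ by $(q^{1-n};q^4)_k/(q^{4-n};q^4)_k$ (or the $q^{1-3n}$ variant for $n\equiv3\pmod4$) so that the sum becomes the terminating series ${}_2\phi_1\bigl[q^{1-n},q;q^{4-n};q^4,-q\bigr]$ modulo $\Phi_n(q)$---and your observation that the four residue classes modulo $8$ separate according to the parity of the termination index $(n-1)/4$ or $(3n-1)/4$ both match the paper. But the heart of the proof, a closed-form evaluation of that terminating ${}_2\phi_1$ with argument $-q$, is exactly what you leave unresolved, and the route you sketch for it is doubtful. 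Massaging it into a shape summable by the $q$-Kummer (Bailey--Daum) formula \eqref{eq:qkummer} is not how it goes: for instance, reversing the order of summation of ${}_2\phi_1\bigl[q^{1-n},q;q^{4-n};q^4,-q\bigr]$ returns a series with the \emph{same} parameters and argument $-q^5$, which is still not of Bailey--Daum type (with these parameters that summation requires argument $-q^3$, which is precisely the situation of Theorem~\ref{thm:57-1}), and none of the Heine-type or quadratic transformations you mention is shown to land on \eqref{eq:qkummer}. Your fallback (a parametric congruence modulo $(1-aq^n)(a-q^n)$ with $a\to1$) does not help either: the statement is only modulo $\Phi_n(q)$, and at $a=q^{\pm n}$ you are back to the same unevaluated terminating series.

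The paper's actual tool is a different classical summation: Jackson's terminating $q$-analogue of Dixon's sum \eqref{eq:qDxion} (see \cite[Appendix (II.15)]{GR04}), specialised with $q\to q^4$, $b=q$, $c=-q^{2-4N}$ so that one lower parameter cancels $c$ and the ${}_3\phi_2$ collapses to a ${}_2\phi_1$ with argument $-q^5$, followed by the inversion $q\to q^{-1}$, which produces exactly the argument $-q$; this settles the cases $n\equiv1,3\pmod8$ (even termination index). For $n\equiv5,7\pmod8$, where the index is odd, one needs the `odd version' of Jackson's $q$-Dixon sum, deduced from Bailey's formula \cite[eq.~(2.3)]{Ba50}; that identity is what supplies the factor $1-q^2$ and the shifted entries $-q^3,-q^4,q^5,q^6$, rather than these emerging from bookkeeping around \eqref{eq:qkummer}. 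Without identifying these summation formulas (or an equivalent evaluation of the terminating ${}_2\phi_1$ at argument $-q$), your outline has a genuine gap at its central step.
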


\begin{proof}
Letting $q\to q^4$, $b=q$ and $c=-q^{2-4N}$ in Jackson's terminating $q$-analogue of Dixon's sum (see \cite[Appendix (II.15)]{GR04}),
\begin{align}
 {}_{3}\phi_{2}\biggl[\begin{matrix} q^{-2N}, \, b, \, c \\ q^{1-2N}/b,\, q^{1-2N}/c \end{matrix};
q, \, \frac{q^{2-N}}{bc}\biggr]
=\frac{(b,c;q)_N (q,bc;q)_{2N}}{(q,bc;q)_N (b,c;q)_{2N}},
\label{eq:qDxion}
\end{align}
we obtain
\begin{align*}
 {}_{2}\phi_{1}\biggl[\begin{matrix} q^{-8N}, \, q \\ q^{3-8N}\end{matrix}; q^4, \, -q^5\biggr]
=\frac{(q,-q^{2-4N};q^4)_N (q^4,-q^{3-4N};q^4)_{2N}}{(q^4,-q^{3-4N};q^4)_N (q,-q^{2-4N};q)_{2N}}.
\end{align*}
Taking $q\to q^{-1}$ in the above identity, we are led to
\begin{align*}
 {}_{2}\phi_{1}\biggl[\begin{matrix} q^{-8N}, \, q \\ q^{3-8N} \end{matrix}; q^4, \, -q\biggr]
=\frac{(q,-q^{2-4N};q^4)_N (q^4,-q^{3-4N};q^4)_{2N}\,q^{-4N}}
{(q^4,-q^{3-4N};q^4)_N (q,-q^{2-4N};q)_{2N}}.
\end{align*}
It follows that, for $n\equiv 1\pmod{8}$,
\begin{align*}
\sum_{k=0}^{n-1}\frac{(q;q^4)_k^2 } {(q^4;q^4)_k^2}(-q)^{k}
&\equiv \sum_{k=0}^{(n-1)/4}\frac{(q^{1-n};q^4)_k (q;q^4)_k } {(q^4;q^4)_k (q^{4-n};q^4)_k}(-q)^{k} \\[5pt]
&=\frac{(q,-q^{(5-n)/2};q^4)_{(n-1)/8} (q^4,-q^{(7-n)/2};q^4)_{(n-1)/4}q^{(1-n)/2}}
{(q^4,-q^{(7-n)/2};q^4)_{(n-1)/4}q^{(1-n)/8}(q,-q^{(5-n)/2};q^4)_{(n-1)/4} }  \\[5pt]
&=\frac{(q,q^2,-q^3,-q^4;q^4)_{(n-1)/8} } {(q^{4-n};q^4)_{(n-1)/4}}q^{(1-n^2)/8}.
\end{align*}
Since $q^n\equiv 1\pmod{\Phi_n(q)}$, we complete the proof of the theorem for the first case.
The other three cases follow in a similar way: modulo $\Phi_n(q)$,
\begin{alignat*}{2}
\sum_{k=0}^{n-1}\frac{(q;q^4)_k^2 } {(q^4;q^4)_k^2}(-q)^{k}
&\equiv \sum_{k=0}^{(3n-1)/4}\frac{(q^{1-3n};q^4)_k (q;q^4)_k } {(q^4;q^4)_k (q^{4-3n};q^4)_k}(-q)^{k}
&\quad &\text{for}\; n\equiv 3\pmod{8},
\\
\sum_{k=0}^{n-1}\frac{(q;q^4)_k^2 } {(q^4;q^4)_k^2}(-q)^{k}
&\equiv \sum_{k=0}^{(n-1)/4}\frac{(q^{1-n};q^4)_k (q;q^4)_k } {(q^4;q^4)_k (q^{4-n};q^4)_k}(-q)^{k}
&\quad &\text{for}\; n\equiv 5\pmod{8},
\\
\sum_{k=0}^{n-1}\frac{(q;q^4)_k^2 } {(q^4;q^4)_k^2}(-q)^{k}
&\equiv\sum_{k=0}^{(3n-1)/4}\frac{(q^{1-3n};q^4)_k (q;q^4)_k } {(q^4;q^4)_k (q^{4-3n};q^4)_k}(-q)^{k}
&\quad &\text{for}\; n\equiv 7\pmod{8},
\end{alignat*}
and for the last two instances, we use the `odd version' of Jackson's $q$-analogue of Dixon's sum
(which follows from \cite[eq.~(2.3)]{Ba50}),
\begin{align*}
 {}_{3}\phi_{2}\biggl[\begin{matrix} q^{1-2N}, \, b, \, c \\ q^{2-2N}/b, \, q^{1-2N}/c\end{matrix};
q, \, \frac{q^{2-N}}{bc}\biggr]
=\frac{(b,c;q)_N (q,bc;q)_{2N-1}}{(q;q)_{N-1}(bc;q)_N (b;q)_{2N-1}(c;q)_{2N}},
\end{align*}
instead of \eqref{eq:qDxion}.
\end{proof}

\begin{theorem}
\label{th5}
For any positive integer $n\equiv 1\pmod{4}$, modulo $\Phi_n(q)$, we have
\begin{align*}
&\sum_{k=0}^{(n-1)/2}\frac{(q;q^2)_k^2 (q^2;q^4)_k} {(q^2;q^2)_k^2 (q^4;q^4)_k}\,(-q)^k
\\
&\quad\equiv\begin{cases}
\dfrac{(q,q^2;q^4)_{(n-1)/8}^2 (-q;q)_{(n-1)/2} } {(q^4;q^4)_{(n-1)/4}^2}\,q^{-(1-n)^2/4} &\text{if $n\equiv 1\pmod{8}$},\\[7.5pt]
0 &\text{if $n\equiv 5\pmod{8}$}.
\end{cases}
\end{align*}
\end{theorem}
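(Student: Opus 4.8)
The plan is to route the left-hand side through the $q$-Clausen identity~\eqref{eq:jackson} and then recognise the two ${}_2\phi_1$-factors that appear as (congruent to) series already evaluated in the proofs of Theorems~\ref{thm:57-1} and~\ref{thm:57-2}. Take $a=q^{-n}$ and $z=-q$ in~\eqref{eq:jackson}. Since $1-n$ is even, the factor $(q^{1-n};q^2)_k$ annihilates every term with $k>(n-1)/2$, so the right-hand side of~\eqref{eq:jackson} becomes the terminating sum $\sum_{k=0}^{(n-1)/2}\frac{(q^{1-n};q^2)_k(q^{1+n};q^2)_k(q^2;q^4)_k}{(q^2;q^2)_k^2(q^4;q^4)_k}(-q)^k$. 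Because $q^n\equiv1$ while none of the factors $1-q^{1\pm n+2j}$ with $0\le j<(n-1)/2$ is divisible by $\Phi_n(q)$, one has $(q^{1\pm n};q^2)_k\equiv(q;q^2)_k\pmod{\Phi_n(q)}$ on that range, whence, modulo $\Phi_n(q)$,
\[
\sum_{k=0}^{(n-1)/2}\frac{(q;q^2)_k^2 (q^2;q^4)_k}{(q^2;q^2)_k^2 (q^4;q^4)_k}(-q)^k
\equiv{}_2\phi_1\biggl[\begin{matrix} q^{1-n},\,q^{1+n} \\ q^4 \end{matrix};q^4,-q\biggr]
\,{}_2\phi_1\biggl[\begin{matrix} q^{1-n},\,q^{1+n} \\ q^4 \end{matrix};q^4,-q^3\biggr].
\]

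In each of these two factors — each a series terminating at $k=(n-1)/4$ by virtue of $q^{1-n}$ — one may replace the parameter $q^{1+n}$ by $q$ and one copy of the denominator parameter $q^4$ by $q^{4-n}$, both replacements being $\Phi_n(q)$-congruences valid for $0\le k\le(n-1)/4$. This turns the factors, modulo $\Phi_n(q)$, into ${}_2\phi_1\bigl[q^{1-n},q;q^{4-n};q^4,-q\bigr]$ and ${}_2\phi_1\bigl[q^{1-n},q;q^{4-n};q^4,-q^3\bigr]$. The first is precisely the series summed by Jackson's terminating $q$-analogue of Dixon's sum in the proof of Theorem~\ref{thm:57-2}, and the second is the series of~\eqref{eq:8k-1} and~\eqref{eq:8k-5}, summed by the $q$-Kummer formula~\eqref{eq:qkummer}. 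For $n\equiv5\pmod8$ the second factor is identically zero (the product $(q^{5-n};q^8)_\infty$ in~\eqref{eq:qkummer} then carries a vanishing factor), and since the other factor is $\Phi_n(q)$-integral this settles that case. For $n\equiv1\pmod8$ one gets, modulo $\Phi_n(q)$,
\[
{}_2\phi_1\biggl[\begin{matrix} q^{1-n},\,q \\ q^{4-n} \end{matrix};q^4,-q\biggr]\equiv\frac{(q,q^2,-q^3,-q^4;q^4)_{(n-1)/8}}{(q^4;q^4)_{(n-1)/4}}\,q^{(1-n^2)/8},
\qquad
{}_2\phi_1\biggl[\begin{matrix} q^{1-n},\,q \\ q^{4-n} \end{matrix};q^4,-q^3\biggr]\equiv\frac{(q^5,q^7;q^8)_{(n-1)/8}}{(q^4;q^4)_{(n-1)/4}}.
\]

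Multiplying the two evaluations reduces the theorem, for $n\equiv1\pmod8$, to the $q$-Pochhammer congruence
\[
(q,q^2,-q^3,-q^4;q^4)_{M}\,(q^5,q^7;q^8)_{M}\,q^{(1-n^2)/8}
\equiv(q,q^2;q^4)_{M}^2\,(-q;q)_{4M}\,q^{-(1-n)^2/4}\pmod{\Phi_n(q)},
\]
with $M=(n-1)/8$. Writing $(-q;q)_{4M}=(-q;q^4)_M(-q^2;q^4)_M(-q^3;q^4)_M(-q^4;q^4)_M$ and using $(a;q^4)_M(-a;q^4)_M=(a^2;q^8)_M$, this reduces after cancellation to $(q^5,q^7;q^8)_M\,q^{(1-n^2)/8}\equiv(q^2,q^4;q^8)_M\,q^{-(1-n)^2/4}$; and here $q^n\equiv1$ finishes it, since putting $i=M-j$ turns $1-q^{7+8j}$ into $-q^{-(8i-6)}(1-q^{8i-6})$ and $1-q^{5+8j}$ into $-q^{-(8i-4)}(1-q^{8i-4})$, so that $(q^7;q^8)_M\equiv(-1)^Mq^{-(4M^2-2M)}(q^2;q^8)_M$ and $(q^5;q^8)_M\equiv(-1)^Mq^{-4M^2}(q^4;q^8)_M$ modulo $\Phi_n(q)$, and collecting the exponents yields $q^{-16M^2}=q^{-(1-n)^2/4}$.

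I expect the real work to lie in this last step: the closed forms delivered by the $q$-Dixon and $q$-Kummer summations are genuinely asymmetric (they carry the stray $q^{(1-n^2)/8}$ and $(q^5,q^7;q^8)_M$), and matching their product against the symmetric expression $(q,q^2;q^4)_M^2(-q;q)_{4M}\,q^{-(1-n)^2/4}$ is where one must track signs, shifts by $n$, and powers of $q$ with care. A lesser subtlety already sits in the opening step, at the boundary index $k=(n-1)/2$, where one still has to check that $(q^{1\pm n};q^2)_{(n-1)/2}$ is coprime to $\Phi_n(q)$.
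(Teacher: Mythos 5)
Your proposal is correct and follows essentially the same route as the paper: specialise Jackson's $q$-Clausen identity \eqref{eq:jackson} at $a=q^{-n}$, $z=-q$, and evaluate the two resulting ${}_2\phi_1$ factors via the $q$-Kummer and $q$-Dixon summations underlying Theorems~\ref{thm:57-1} and~\ref{thm:57-2}, with the $n\equiv5\pmod8$ case killed by the vanishing second factor. Your explicit verification of the final product simplification (reducing to $q^{-16M^2}=q^{-(1-n)^2/4}$) is exactly the step the paper compresses into ``after some simplifications,'' and it checks out.
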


\begin{proof}
For $n\equiv 1\pmod{8}$, in Jackson's $q$-Clausen identity \eqref{eq:jackson}
we take $a=q^{-n}$ and $z=-q$ to obtain
\begin{align}
&\sum_{k=0}^{(n-1)/2}\frac{(q;q^2)_k^2(q^2;q^4)_k}{(q^2;q^2)_k^2(q^4;q^4)_k} (-q)^k
\notag\\
&\quad
\equiv\sum_{k=0}^{n-1}\frac{(q^{1-n};q^2)_k (q^{1+n};q^2)_k(q^2;q^4)_k}{(q^2;q^2)_k^2(q^4;q^4)_k}(-q)^{k}
\notag\\
&\quad
=\sum_{k=0}^{n-1}\frac{(q^{1-n};q^4)_k (q^{1+n};q^4)_k}{(q^4;q^4)_k^2}(-q)^k
\notag\\ &\quad\qquad\times
\sum_{k=0}^{n-1}\frac{(q^{1-n};q^4)_k (q^{1+n};q^4)_k}{(q^4;q^4)_k^2}(-q)^{3k}
\pmod{\Phi_n(q)^2}.
\label{eq:57-3}
\end{align}
By \eqref{eq:8k-1357}, we see that the second sum is congruent to
\begin{align*}
\frac{(q^5,q^7;q^8)_{(n-1)/8} }{(q^4;q^4)_{(n-1)/4}}
&\equiv \frac{(q^{5-n},q^{7-n};q^8)_{(n-1)/8} }{(q^4;q^4)_{(n-1)/4}}
\\
&=\frac{(q^2,q^4;q^8)_{(n-1)/8}q^{-(n-1)(n-3)/8} }{(q^4;q^4)_{(n-1)/4}} \pmod{\Phi_n(q)},
\end{align*}
while by Theorem \ref{thm:57-2} the first sum is congruent to
\begin{align*}
 {}_{2}\phi_{1}\biggl[\begin{matrix} q^{1-n}, \, q \\ q^{4-n} \end{matrix}; q^4, \, -q\biggr]
\equiv \frac{(q,q^2,-q^3,-q^4;q^4)_{(n-1)/8} } {(q^4;q^4)_{(n-1)/4}}q^{(1-n^2)/8}\pmod{\Phi_n(q)}.
\end{align*}
This establishes the $n\equiv 1\pmod{8}$ case of the theorem after some simplifications.

For $n\equiv 5\pmod{8}$, we again have \eqref{eq:57-3}. Since $q^n\equiv 1\pmod{\Phi_n(q)}$,
we know that the second sum on the right-hand side of \eqref{eq:57-3} is congruent to $0$ modulo $\Phi_n(q)$ by Theorem \ref{thm:57-1}, and so is
the right-hand side of \eqref{eq:57-3}. This proves the theorem for $n\equiv 5\pmod{8}$.
\end{proof}

We leave the related cases when $n\equiv3\pmod4$ of Theorem~\ref{th5} as an open problem to the reader.

\begin{problem}
\label{ec1}
For any positive integer $n\equiv 7\pmod{8}$, show that
\begin{align*}
\sum_{k=0}^{(n-1)/2}\frac{(q;q^2)_k^2 (q^2;q^4)_k} {(q^2;q^2)_k^2 (q^4;q^4)_k}\,(-q)^k
\equiv 0\pmod{\Phi_n(q)}.
\end{align*}
Give a related $q$-congruence for $n\equiv3\pmod8$.
\end{problem}

\section{Conclusion and open questions}
\label{sec-conc}

We have the following generalisation of Theorem~\ref{thm:moreover}
for $n\equiv 3\pmod{4}$, which (partly) forms the grounds of the arithmetic observations preceding the statement of the theorem.

\begin{conjecture}\label{conj:4k+3-2}
For $n\equiv 3\pmod{4}$ and any positive integer $r$, we have
\begin{align*}
\sum_{k=0}^{rn-1}\frac{(q;q^2)_k^2(q^2;q^4)_k}{(q^2;q^2)_k^2(q^4;q^4)_k}\,q^{2k}
&\equiv 0 \pmod{\Phi_n(q)^2},
\\
\sum_{k=0}^{rn+(n-1)/2}\frac{(q;q^2)_k^2(q^2;q^4)_k}{(q^2;q^2)_k^2(q^4;q^4)_k}\,q^{2k}
&\equiv 0 \pmod{\Phi_n(q)^2}.
\end{align*}
\end{conjecture}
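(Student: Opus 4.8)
The plan is to run the ``creative microscoping'' argument that underlies Theorem~\ref{thm:moreover}. Fix $n\equiv3\pmod4$, let $N$ stand for either truncation bound $rn-1$ or $rn+(n-1)/2$, and set
\[
S_N(a)=\sum_{k=0}^{N}\frac{(aq;q^2)_k(q/a;q^2)_k(q^2;q^4)_k}{(q^2;q^2)_k^2(q^4;q^4)_k}\,q^{2k},
\]
so that $S_N(1)$ is the sum in Conjecture~\ref{conj:4k+3-2}. The scheme is to prove that $S_N(a)$, as a rational function of $a$, (i) vanishes at $a=q^{\pm n}$, and (ii) has no factor $\Phi_n(q)$ in its denominator. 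Granting (i) and (ii), one writes $a^NS_N(a)=(1-aq^n)(a-q^n)\,H(a)$ with $H$ again free of $\Phi_n(q)$ in the denominator, and putting $a=1$ gives $S_N(1)=(1-q^n)^2H(1)\equiv0\pmod{\Phi_n(q)^2}$.

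Step (i) is easy and is in essence already in the paper. Since $(q^{1-n};q^2)_k$ has the vanishing factor $1-q^n$ at index $j=(n-1)/2$, it is identically zero for $k\ge(n+1)/2$; as $N\ge(n-1)/2$ for every $r\ge1$, the specialisation $a=q^{-n}$ — equivalently $a=q^n$, by the $a\mapsto1/a$ symmetry of the summand — collapses $S_N(a)$ to
\[
\sum_{k=0}^{(n-1)/2}\frac{(q^{1-n};q^2)_k(q^{1+n};q^2)_k(q^2;q^4)_k}{(q^2;q^2)_k^2(q^4;q^4)_k}\,q^{2k}
={}_{4}\phi_{3}\biggl[\begin{matrix} q^{1-n}, \, q^{n+1}, \, q, \, -q \\ q^2,\, -q^2,\, q^2\end{matrix}; q^2, \, q^2\biggr],
\]
exactly the sum appearing in~\eqref{eq:with-a-qn}, and Andrews' terminating $q$-analogue of Watson's formula evaluates it to $0$ because its degree parameter $(n-1)/2$ is odd precisely when $n\equiv3\pmod4$.

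Step (ii) is where the real work lies. When $N\le n-1$ — in particular for $r=1$ in the first family, which is just Theorem~\ref{thm:moreover} — every summand of $S_N(a)$ has denominator $(q^2;q^2)_k^2(q^4;q^4)_k$ with $k<n$, hence prime to $\Phi_n(q)$, so (ii) is automatic. But once $N\ge n$ — that is, for $r\ge2$ in the first family and for all $r$ in the second — the terms with $k\ge n$ acquire factors $\Phi_n(q)$ in their denominators (from $(q^2;q^2)_k$), and (ii) becomes a \emph{genuine} cancellation statement for the $\Phi_n(q)$-polar parts of the summands; it does \emph{not} follow from the (true) fact that $S_N(1)=\sum_kc_kq^{2k}$ is $\Phi_n(q)$-integral, because that termwise integrality rests on the cancellation $(q;q^2)_k^2/(q^2;q^2)_k^2$, which is destroyed for $a\ne1$. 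I would attack (ii) via the block decomposition $\sum_{k=0}^{rn-1}=\sum_{l=0}^{r-1}\sum_{k=ln}^{(l+1)n-1}$ (and its analogue for $rn+(n-1)/2$): re-indexing $k=ln+j$ and using $q^{2ln}\equiv q^{4ln}\equiv1\pmod{\Phi_n(q)}$ shows that the $l$-th block is congruent modulo $\Phi_n(q)$ to $c_{ln}q^{2ln}\sum_{j=0}^{n-1}c_jq^{2j}$, which vanishes modulo $\Phi_n(q)$ by Theorem~\ref{thm:moreover}.

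The main obstacle is to upgrade this block analysis from modulo $\Phi_n(q)$ to modulo $\Phi_n(q)^2$: one must track the first-order corrections to $q^{2ln}\equiv1$ and $q^{4ln}\equiv1$, which should reduce the claim to Theorem~\ref{thm:moreover} together with an auxiliary $q$-congruence modulo $\Phi_n(q)$ for $\sum_{j=0}^{n-1}c_jq^{2j}$ weighted by those corrections (a mod-$\Phi_n(q)$ statement that the same parametric method, or a $q$-WZ certificate, ought to deliver), all the while keeping every intermediate quantity $\Phi_n(q)$-integral — note that the crude move of clearing the full common denominator $(q^2;q^2)_N^2(q^4;q^4)_N$ is useless here, since that denominator already carries the factor $\Phi_n(q)^{3(r-1)}$. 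A cleaner alternative, if it can be found, would be to produce by iterating Jackson's $q$-Clausen identity~\eqref{eq:jackson} against the $q$-Chu--Vandermonde sum~\eqref{eq:q-Chu} an explicit — or at least manifestly $\Phi_n(q)$-integral — closed form for $S_{rn-1}(a)$, which would make both (i) and (ii) transparent.
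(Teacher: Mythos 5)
This statement is a \emph{conjecture} in the paper, not a theorem: the authors explicitly note that, although parametric congruences modulo $(1-aq^n)(a-q^n)$ can be deduced, one cannot take the limit $a\to1$ this time, that only the weaker congruences modulo $\Phi_n(q)$ follow (via the $q$-Lucas theorem), and that they believe the conjectures are not easy to prove. Your proposal runs into exactly this wall, and you acknowledge it yourself: your step (ii) --- that $S_N(a)$ carries no $\Phi_n(q)$ in its denominator once $N\ge n$ --- is precisely the missing cancellation, and everything you offer for it is conditional (``should reduce'', ``ought to deliver'', ``if it can be found''). So what you have is a correct diagnosis of why the naive creative-microscoping argument of Theorem~\ref{thm:moreover} does not extend, not a proof.

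Concretely, for $k\ge n$ the summand of $S_N(a)$ has $\Phi_n(q)$-order $-2$: the denominator $(q^2;q^2)_k^2(q^4;q^4)_k$ contributes $\Phi_n(q)^3$, the numerator factor $(q^2;q^4)_k$ contributes only one power of $\Phi_n(q)$, and the two powers that rescue the specialisation $a=1$ (coming from the factor at $j=(n-1)/2$ in $(q;q^2)_k^2$) are destroyed by the deformation to $(aq;q^2)_k(q/a;q^2)_k$. Hence the vanishing of $S_N(a)$ at $a=q^{\pm n}$ (your step (i), which is fine and is the same Andrews-type evaluation used in the paper) gives no control on $S_N(1)$ modulo $\Phi_n(q)^2$ unless these polar parts are shown to cancel; and your block decomposition, as stated, only yields congruences modulo $\Phi_n(q)$ --- the same bound the paper already obtains by the $q$-Lucas theorem. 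The upgrade from $\Phi_n(q)$ to $\Phi_n(q)^2$ is the entire content of the conjecture and remains open: the ``auxiliary congruence for the first-order corrections'' you invoke is neither identified nor proved, so the argument cannot be accepted as a proof of Conjecture~\ref{conj:4k+3-2}.
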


We also give a related generalisation of \cite[Conjecture 4.13]{GZ18b}.

\begin{conjecture}\label{conj:4k+3-3}
For $n\equiv 3\pmod{4}$ and any positive integer $r$, we have
\begin{align*}
\sum_{k=0}^{rn-1}\frac{(1+q^{4k+1})(q^2;q^4)_k^3}{(1+q)\,(q^4;q^4)_k^3} q^k
&\equiv 0\pmod{\Phi_n(q)^2},
\\
\sum_{k=0}^{rn+(n-1)/2}\frac{(1+q^{4k+1})(q^2;q^4)_k^3}{(1+q)\,(q^4;q^4)_k^3} q^k
&\equiv 0\pmod{\Phi_n(q)^2}.
\end{align*}
\end{conjecture}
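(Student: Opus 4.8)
The plan is to deduce both congruences from a single \emph{terminating} $q$-hypergeometric identity by the creative-microscoping method. Write $c_k=\dfrac{(1+q^{4k+1})(q^2;q^4)_k^3}{(1+q)(q^4;q^4)_k^3}\,q^k$ for the common summand and introduce the one-parameter deformation
\[
c_k(a)=\frac{(1+q^{4k+1})\,(aq^2;q^4)_k\,(q^2/a;q^4)_k\,(q^2;q^4)_k}{(1+q)\,(q^4/a;q^4)_k\,(aq^4;q^4)_k\,(q^4;q^4)_k}\,q^k ,
\]
so that $c_k(1)=c_k$ and $c_k(1/a)=c_k(a)$; in particular every partial sum $\sum_{k=0}^{N}c_k(a)$ is invariant under $a\mapsto 1/a$. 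Since $n\equiv3\pmod4$ forces $2-2n\equiv0\pmod4$, the specialisation $a=q^{-2n}$ turns $(aq^2;q^4)_k$ into $(q^{2-2n};q^4)_k$, whose $\big((n-1)/2\big)$-th factor is $1-q^{2-2n+2(n-1)}=0$; hence $(q^{2-2n};q^4)_k$ vanishes for every $k\ge(n+1)/2$. Consequently, for \emph{every} positive integer $r$, both $\sum_{k=0}^{rn-1}c_k(a)$ and $\sum_{k=0}^{rn+(n-1)/2}c_k(a)$ collapse at $a=q^{-2n}$ to one and the same finite sum $\Xi:=\sum_{k=0}^{(n-1)/2}c_k(q^{-2n})$.

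By the $a\mapsto1/a$ symmetry the partial sum $\phi_N(a):=\sum_{k=0}^{N}c_k(a)$ collapses to $\Xi$ also at $a=q^{2n}$ (note $2n\not\equiv0\pmod4$, so $\phi_N$ is regular at $a=q^{\pm2n}$), whence $\phi_N(a)-\Xi$ vanishes at $a=q^{\pm2n}$ and is divisible by $(1-aq^{2n})(a-q^{2n})$ — using, as in \cite{GZ18b}, the standard check that the resulting quotient is integral at $\Phi_n(q)$. Setting $a=1$ and observing that $(1-q^{2n})(1-q^{-2n})$ is a unit multiple of $(1-q^{2n})^2$, which is divisible by $\Phi_n(q)^2$ because $\Phi_n(q)\mid q^n-1\mid q^{2n}-1$, we obtain
\[
\sum_{k=0}^{rn-1}c_k\equiv\sum_{k=0}^{rn+(n-1)/2}c_k\equiv\Xi\pmod{\Phi_n(q)^2}.
\]
Thus both parts of the conjecture, uniformly in $r$, reduce to the single congruence $\Xi\equiv0\pmod{\Phi_n(q)^2}$.

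To prove $\Xi\equiv0\pmod{\Phi_n(q)^2}$ one would evaluate this terminating series — terminating thanks to the factor $(q^{2-2n};q^4)_k=(q^{-2(n-1)};q^4)_k$ — in closed form and read off the factor $\Phi_n(q)^2$. The natural tools are the summation theorems already used above: Andrews' terminating $q$-Watson formula, Jackson's terminating $q$-Dixon sum, and the $q$-Pfaff--Saalsch\"utz formula; equivalently, one extracts the appropriate terminating case of the three-parameter identity \cite[eq.~(52)]{GZ18b}, whose $a=b=c=1$ instance is \eqref{eq-HqA}. For $n=3$, for instance, one computes $\Xi=(1+q^3)\,\Phi_3(q)^2\big/\bigl(q(1+q)(1+q+q^2+q^3+q^4)\bigr)$, visibly a multiple of $\Phi_3(q)^2$.

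The hard part is exactly this evaluation. The summand of $\Xi$ carries the ``cubic'' block $(\,\cdot\,q^2;q^4)_k^{3}$ characteristic of a $q$-deformation of a Ramanujan-type series for $1/\pi$, and its weight $1+q^{4k+1}$ is well-poised but not very-well-poised, so $\Xi$ is \emph{not} of the standard Clausen--Watson--Dixon ${}_4\phi_3$ type to which the classical summations above directly apply; locating — or proving afresh — the cubic summation that closes $\Xi$, that is, the terminating parametric refinement of \cite[eq.~(52)]{GZ18b}, is the crux, and is presumably why the statement is still a conjecture. Finally, since the entire argument is carried out over congruences in $\mathbb Z[q]$ modulo powers of $\Phi_n(q)$, it would give the result for all integers $n\equiv3\pmod4$ — not merely primes — and both displayed congruences at once.
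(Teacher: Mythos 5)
This statement is one of the paper's open conjectures: the authors give no proof, state explicitly that ``we cannot take the limit as $a\to 1$ to accomplish the proof of Conjectures~\ref{conj:4k+3-2} and \ref{conj:4k+3-3}'', and only claim the congruences modulo the first power $\Phi_n(q)$ (via the $q$-Lucas theorem). Your proposal likewise does not prove it: you openly leave the decisive step, the terminating congruence $\Xi\equiv0\pmod{\Phi_n(q)^2}$, unestablished, so at best you have a (conditional) reduction, not a proof.

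Moreover, the reduction itself has a genuine gap exactly at the point you wave through with ``the standard check that the resulting quotient is integral at $\Phi_n(q)$''. Writing $\phi_N(a)-\Xi=(1-aq^{2n})(a-q^{2n})\,S(a)/D(a)$ with the common denominator $D(a)=(aq^4;q^4)_N(q^4/a;q^4)_N(q^4;q^4)_N(1+q)$, the specialisation $a=1$ gives the factor $(1-q^{2n})^2$ in the numerator, but $D(1)$ contains $(q^4;q^4)_N^{\,3}$, which for every range occurring in the conjecture beyond $k\le n-1$ (that is, $N=rn-1$ with $r\ge2$ and $N=rn+(n-1)/2$ for all $r\ge1$) is divisible by $\Phi_n(q)^3$ and higher powers, since $\Phi_n(q)\mid 1-q^{4jn}$. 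Hence the $\Phi_n(q)$-adic valuation of $(1-q^{2n})^2S(1)/D(1)$ is not controlled without knowing the valuation of $S(1)$, and the conclusion $\phi_N(1)\equiv\Xi\pmod{\Phi_n(q)^2}$ does not follow; this is precisely the obstruction the authors record in Section~\ref{sec-conc} as the reason the creative-microscoping limit $a\to1$ fails here (in \cite{GZ18b} the check succeeds only because the summation ranges stop before $k=n$, so the denominators at $a=1$ are coprime to $\Phi_n(q)$). So even granting your (correct, and nicely verified at $n=3$) observation that all parametric partial sums collapse to the same $\Xi$ at $a=q^{\pm2n}$, your argument covers at most the case $r=1$ of the first congruence conditionally on $\Xi\equiv0\pmod{\Phi_n(q)^2}$, and that last congruence — the crux, as you yourself note — remains unproved, which is why the statement is a conjecture in the paper.
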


Note that, although similar congruences with a parameter $a$ modulo $(1-aq^n)\*(a-\nobreak q^n)$ can be deduced, we cannot
take the limit as $a\to 1$  to accomplish the proof of Conjectures \ref{conj:4k+3-2} and \ref{conj:4k+3-3} this time.
Using the $q$-Lucas theorem, we can show that all the congruences in Conjectures \ref{conj:4k+3-2} and \ref{conj:4k+3-3} are true modulo $\Phi_n(q)$.
Moreover, the following similar congruence in \cite[Theorem 4.14]{GZ18b},
\begin{equation*}
\sum_{k=0}^{(n-1)/2}\frac{(q;q^2)_k ^2}{(q^2;q^2)_k (q^4;q^4)_k}\,q^{2k}\equiv 0\pmod{\Phi_n(q)^2}
\quad\text{for}\; n\equiv 3\pmod{4},
\end{equation*}
does not have such a generalisation.
For this reason, we believe that Conjectures \ref{conj:4k+3-2} and \ref{conj:4k+3-3} are not easy to prove.

\medskip
The $q$-extension \eqref{eq-HqB} of Bauer's formula \eqref{eq-HB} is not unique. In \cite{Guo18b,GZ18a} we mention
a `more classical' version
\begin{equation}
\sum_{k=0}^{\infty}\frac{1-q^{4k+1}}{1-q}\,\frac{(q;q^2)_k^3}{(q^2;q^2)_k^3}\,(-q)^{k^2}
=\frac{(q;q^2)_\infty(q^3;q^2)_\infty}{(q;q^2)_\infty^2}.
\label{eq-HqB2}
\end{equation}
In spite of \eqref{eq-HqB2} (in fact, its parametric modification) being suitable for proving the congruences \eqref{eq-cong0}
on the basis of our method from \cite{GZ18b}, dropping off the factor $(1-q^{4k+1})/(1-q)$ here does not lead to a `suitable' $q$-analogue of \eqref{eq-H2}.
Namely, a numerical check suggests no congruences for the truncated sums of the resulting series.

\medskip
Finally, we notice that the sequence $a_1(p)$ in \eqref{eq-a1} is ultimately linked with a remarkable classics,
the two-square theorem due to Fermat and Gauss (see \cite[Chap.~4]{AZ18}, \cite{CELV99} and also
the unpublished portion of Ramanujan's paper \cite{Ra15}, reproduced in \cite[Chap.~10]{AB12} and relating the two-square
generating function to the Dirichlet $L$-function in~\eqref{eq-L}).
However, no reasonable $q$-analogues of this result have been recorded yet.


\end{document}